\documentclass[twoside,11pt,reqno]{amsart}
\usepackage[a4paper,margin=1.2in]{geometry}
\usepackage[dvipsnames]{xcolor}
\usepackage{amsthm,amsmath,amsfonts,graphicx,geometry,lipsum,amsxtra}
\usepackage{hyperref}
\usepackage{mathrsfs}
\usepackage{verbatim}
\usepackage{appendix}
\usepackage{tikz}
\usepackage{epigraph}
\usepackage{tcolorbox}

\numberwithin{equation}{section}
\usepackage{esint}
\usepackage{mathtools}

\usepackage[dvipsnames]{xcolor}
\usepackage{etoolbox}
\hypersetup{
    colorlinks=true,
    linkcolor=blue,
    citecolor=blue,
    urlcolor=red,
    pdftitle={[BMSS25]Resonance near a degenerate embedded eigenvalue},
    }
\usepackage{comment}
\usepackage{amsmath}
\usepackage{tikz}
\usepackage{epigraph}
\usepackage{lipsum}
\allowdisplaybreaks

\definecolor{titlepagecolor}{cmyk}{1,.60,0,.40}

\patchcmd{\subsection}{\normalfont}{\normalfont\color{blue}}{}{}
\DeclareFixedFont{\titlefont}{T1}{ppl}{b}{it}{0.5in}

\def\th@plain{
  \thm@notefont{}
  \itshape 
}
\def\th@definition{
  \thm@notefont{}
  \normalfont 
}
\makeatother

\usepackage{srcltx} 

\usepackage{amscd}
\usepackage{amssymb}

\usepackage{amsmath}
\usepackage{pb-diagram}
\usepackage{graphicx}

\usepackage{hyperref}
\usepackage{array}
\usepackage[all]{xy}
\xyoption{matrix}
\xyoption{arrow}
\usepackage{pdfsync}
\usepackage{physics}
\usepackage{enumitem}
 
\usepackage{thmtools}
\usepackage{amssymb}

\DeclareUnicodeCharacter{2212}{-,+}
 \numberwithin{equation}{section}
\newtheorem{theorem}{Theorem}[section]
\newtheorem{proposition}[theorem]{Proposition}
\newtheorem{corollary}[theorem]{Corollary}
\newtheorem{lemma}[theorem]{Lemma}
\newtheorem{remark}[theorem]{Remark}
\newtheorem{definition}[theorem]{Definition}

\newcommand{\bdfn}{\begin{definition}}
\newcommand{\bthm}{\begin{theorem}}
\newcommand{\blem }{\begin{lemma}}
\newcommand{\bcla }{\begin{cla}aim}
\newcommand{\ecla }{\end{cla}im}
\newcommand{\bpro}{\begin{proposition}}
\newcommand{\bcor }{\begin{corollary}}

\newcommand{\brmrk}{\begin{remark}}
\newcommand{\bassu}{\begin{assumption}}
\newcommand{\eassu}{\end{assumption}}
\newcommand{\edfn}{\end{definition}}
\newcommand{\ethm}{\end{theorem}}
\newcommand{\elem }{\end{lemma}}
\newcommand{\epro}{\end{proposition}}
\newcommand{\ecor}{\end{corollary}}
\newcommand{\ermrk}{\end{remark}}

\newcommand{\RR}{\mathbb R}

\renewcommand{\iota}{{i\mkern1mu}}
\renewcommand{\Re}{\operatorname{Re}}
\renewcommand{\Im}{\operatorname{Im}}

\calclayout
\begin{document}
\title[Resonance Near a Simple Embedded Eigenvalue]{Multi-parameter Perturbations of the Laplacian and Resonance Near a Simple Embedded Eigenvalue}
\subjclass[2020]{47A10; 47A55; 81Q15. }
\keywords{resonance; embedded eigenvalue; time delay. }

\author[Bansal]{Hemant Bansal}
\address{Hemant Bansal\\ Department of Mathematical Sciences, Indian Institute of Science Education and Research Mohali\\Sector 81, SAS Nagar, Punjab 140306, India}
\email{ph20030@iisermohali.ac.in}

\author[Maharana]{Alok Maharana}
\address{Alok Maharana\\ Department of Mathematical Sciences, Indian Institute of Science Education and Research Mohali\\Sector 81, SAS Nagar, Punjab 140306, India}
\email{maharana@iisermohali.ac.in}

\author[Sahu]{Lingaraj Sahu}
\address{Lingaraj Sahu\\ Department of Mathematical Sciences, Indian Institute of Science Education and Research Mohali\\Sector 81, SAS Nagar, Punjab 140306, India}
\email{lingaraj@iisermohali.ac.in}

\vspace*{-1cm}

 \maketitle
 
\begin{abstract}
This paper continues the study of resonance phenomena initiated in \cite{LS1} for rank-one perturbations. We consider finite-rank multi-parameter perturbations $H_\alpha$ of the Laplacian on \(L^2(\mathbb{R}^3)\) and establish Breit--Wigner-type asymptotics for the spectral density of $H_\alpha$ along the resonance  $\lambda(\alpha)$ near a simple embedded eigenvalue $\lambda_0$ of $H_a$ as $\alpha\to a$. We also obtain similar asymptotic behaviour for the scattering cross-section and the average time delay.
\end{abstract}

\section{Introduction}
The resonance phenomenon associated with the disappearance of an embedded eigenvalue into the absolutely continuous spectrum under perturbation has been studied by various authors; see, for example, \cite{howland, Orth, Astaburuaga2024}.

In~\cite{LS1}, a detailed analysis was carried out for rank-one perturbations of the Laplacian on \(L^2(\mathbb{R}^3)\). It was shown that the disappearance of a simple embedded eigenvalue gives rise to Breit--Wigner-type asymptotic behaviour of the spectral density, scattering cross-section, and time delay near the resonance. In the present paper, we consider the finite-rank multi-parameter perturbations $H_\alpha$ of the Laplacian in $\RR^3$ such that at $\alpha=a$, $\lambda_0$ is a simple embedded eigenvalue of $H_a$ which disappears into the absolutely continuous spectrum after perturbation. We then show that, after a suitable translation and scaling of the energy variable, the spectral density of $H_\alpha$ exhibits Breit--Wigner-type asymptotics near the embedded eigenvalue $\lambda_0$ of $H_a$ as $\alpha\to a$. Similar asymptotic behaviour is also obtained for the scattering cross-section and the average time delay.

\section{Preliminaries}\label{2}
In this section we present the spectral representation of the Laplacian 
\(\Delta\) in \(\mathbb{R}^{3}\) and the boundary behaviour of its resolvent.

Let \(H_0\) denote the self-adjoint extension of \(-\Delta\) on the Hilbert space \(L^2(\mathbb{R}^3)\), with domain $H^2(\RR^3),$ the Sobolev space.
The operator \(H_0\) is purely absolutely continuous with spectrum \(\sigma(H_0) = [0,\infty)\). The spectral representation of \(H_0\) is given by the unitary operator \(U \colon L^2(\mathbb{R}^3) \to L^2([0,\infty); L^2(S^2))\) defined by
\begin{equation}\label{iso}
    (Uv)_\lambda(\omega)
    = 2^{-1/2} \lambda^{1/4} \widehat{v}(\sqrt{\lambda} \, \omega)
    \quad \text{for almost every } (\lambda,\omega) \in [0,\infty)\times S^2
\end{equation}where $\hat v$ denotes the Fourier transform of $v$ in $L^2$-sense.

For \( v \in L^2(\mathbb{R}^3) \), we simply write \( v_\lambda \) for \( (Uv)_\lambda \). For \( v, w \in L^2(\mathbb{R}^3) \), define the function 
\( \eta_{v,w} \colon \mathbb{R} \to \mathbb{C} \) by
\[
\eta_{v,w}(\lambda)
=
\begin{cases}
\langle v_\lambda, w_\lambda \rangle_{L^2(S^2)}, & \text{if } \lambda > 0, \\[0.3em]
0, & \text{if } \lambda \leq 0.
\end{cases}
\]
Let \( R_0(z) \) denote the resolvent operator of \( H_0 \), 
defined for \(z \in \mathbb{C} \setminus [0,\infty) \). Then for \( \Im z \neq 0 \), we have
\[
\langle R_0(z) v, w \rangle
= \int_0^\infty \langle (R_0(z) v)_\lambda, w_\lambda \rangle \, d\lambda
= \int_{\mathbb{R}} \frac{\eta_{v,w}(\lambda)}{\lambda - z} \, d\lambda.
\]

To study the resolvent limits of \( H_0 \), we restrict our attention to the following dense subspace of \( L^2(\mathbb{R}^3) \). Define
\[
\mathcal{E}
:=
\left\{
v \in L^2(\mathbb{R}^3)
\;\middle|\;
\begin{array}{l}
\text{the map } \lambda \mapsto v_\lambda \text{ is smooth with respect to } L^2(S^2)\text{-norm,} \\[0.2em]
\text{and } \|v_\lambda\|_{L^2(S^2)} \text{ decays rapidly as } \lambda\to 0^+ \text{ and } \lambda\to\infty
\end{array}
\right\}.
\]

For $v\in\mathcal E$, we write $v_s^{(k)}$ for the \(k\)-th derivative of the map \(\lambda\mapsto v_\lambda\) in the norm topology at \(\lambda=s\). 
Observe further that, for \( v, w \in \mathcal{E} \), the function \( \eta_{v,w} \) belongs to the Schwartz class \( \mathcal{S}(\mathbb{R}) \), since the map 
\( \lambda \mapsto \langle v_\lambda, w_\lambda \rangle \) is smooth with all derivatives decaying rapidly as \( \lambda \to 0^+ \) and as \( \lambda \to \infty \), and \( \eta_{v,w} \) vanishes identically on \( (-\infty,0] \). Consequently, by the Plemelj-Privalov Theorem (see \cite[Proposition 2.4]{LS1}), for \( v, w \in \mathcal{E} \),
\begin{equation}\label{eqn-laplacianresolventlimit}
\langle R_0(\lambda \pm \iota 0) v, w \rangle 
:= \lim_{\epsilon \to 0^+} \langle R_0(\lambda \pm \iota \epsilon) v, w \rangle
= \gamma(\eta_{v,w}, \lambda) \pm \iota \pi \, \eta_{v,w}(\lambda)
\end{equation}
where $\gamma(f,\lambda)$ denotes the Cauchy principal value of \(f\) at \(\lambda \in \mathbb{R}\); see \cite[Proposition 2.2]{LS1} for its properties.

For \( v \in \mathcal{E} \) with \( v_a = 0 \) for some \( a> 0 \), we define the function \( \Phi_{v,a} \) by
\begin{equation}\label{modified}
(\Phi_{v,a})_\lambda =
\begin{cases}
\dfrac{v_\lambda}{\lambda - a}, & \text{if } \lambda \neq a, \\[6pt]
v'_a, & \text{if } \lambda = a.
\end{cases}
\end{equation}
 
\noindent It can be seen that $\Phi_{v,a} \in \mathcal E$ and if $w \in \mathcal{E}$ such that
$w_{a}=0$, then \begin{equation}\label{ch5-lem1(c)}
\left.\frac{\partial}{\partial \lambda}
\gamma(\eta_{v,w},\lambda)\right|_{\lambda=a}
=
\langle \Phi_{v,a},\, \Phi_{w,a} \rangle.
\end{equation}
For more details, see \cite[Lemma 3.1]{LS2}.

\section{multi-parameter perturbations of the Laplacian }\label{3}In this section, we investigate the spectral properties of multi-parameter rank-\(n\) 
self-adjoint perturbations of the operator \(H_0\). Let
\[
\mathbb{R}_+^n
=
\{x=(x_1,\ldots,x_n)\in\mathbb{R}^n : x_j>0 \text{ for } j=1,\ldots,n\}
\]
denote the positive orthant of \(\mathbb{R}^n\). For \(\alpha = (\alpha_1, \dots, \alpha_n) \in \mathbb{R}^n_+\), consider the self-adjoint operator
\[
V_\alpha := \sum_{j=1}^n \alpha_j \langle \cdot, u_j \rangle u_j,
\]
where \(\{u_1, \dots, u_n\} \subset \mathcal{E}\) is an orthogonal family in \(L^2(\mathbb{R}^3)\). For notational simplicity, we write \((u_j)_\lambda\) as \(u_{j,\lambda}\) and \(\eta_{u_j,u_k}\) as \(\eta_{jk}\) for \(j,k \in \{1,\cdots,n\}\). Then the perturbed operator
\begin{equation}\label{multi}
H_{{\alpha}} := H_0 + V_{{\alpha}}
\qquad \text{on } L^2(\mathbb{R}^3)
\end{equation}
is self-adjoint with domain \(\mathcal{D}(H_{{\alpha}}) = H^2(\RR^3)\). Since the finite-rank operator $V_{{\alpha}}$ is positive for any ${{\alpha}}\in\RR^n_+$, we have
\[
\sigma(H_{{\alpha}}) = \sigma_{\mathrm{ess}}(H_{{\alpha}}) = [0,\infty).
\]
It is straightforward to check that \(0\) cannot be an eigenvalue of \(H_{ {\alpha}}\). For \(\Im z \neq 0\), we denote by \(R_{ {\alpha}}(z)\) the resolvent operator of \(H_{ {\alpha}}\), and by \(E_{ {\alpha}}\) its associated spectral measure.

To carry out a finer spectral analysis of \(H_{{\alpha}}\), we first derive a
relation between the resolvents of \(H_{{\alpha}}\) and \(H_0\). 
Define the linear map
 $\tau_{{\alpha}} : L^{2}(\mathbb{R}^{3}) \to \mathbb{C}^n$ by
\begin{equation}\label{tau}
\tau_{ {\alpha}} v :=\left(\sqrt{\alpha_1}\langle v,u_{1}\rangle,\cdots,\sqrt{\alpha_n}\langle v,u_n\rangle\right)^\top,\qquad v\in L^2(\mathbb{R}^3).
\end{equation}
Then $\tau_{{\alpha}}^{*}\tau_{{\alpha}} = V_{{\alpha}}.$ For $\Im z \neq 0$, the resolvent identity yields
\begin{equation}\label{comp101}
R_{{ {\alpha}}}(z) - R_{0}(z)
=
- R_{0}(z) V_{{\alpha}} R_{{{\alpha}}}(z)=- R_{0}(z) \tau_{{\alpha}}^{*}\tau_{{\alpha}} R_{{{\alpha}}}(z).
\end{equation}Applying $\tau_{{\alpha}}$ to \eqref{comp101}, we obtain $B({{\alpha}},z)\,\tau_{{\alpha}} R_{{ {\alpha}}}(z) 
=
\tau_{{\alpha}} R_{0}(z),
$
where
\begin{equation}\label{ch5-1eqn1}
B({ {\alpha}},z) := I + \tau_{ {\alpha}} R_{0}(z)\tau_{ {\alpha}}^{*},\qquad \Im z\neq 0.
\end{equation}
Since
$B(\alpha,z)\bigl(I - \,\tau_\alpha R_{\alpha}(z)\tau_\alpha^{*}\bigr)
=
\bigl(I - \tau_\alpha R_{\alpha}(z)\tau_\alpha^{*}\bigr) B(\alpha,z)
=
I,$
the operator $B(\alpha,z)$ is invertible for $\Im z \neq 0$. Consequently, we obtain
\begin{equation}\label{reducedresolventrelation}
\tau_\alpha R_{\alpha}(z)
=
B(\alpha,z)^{-1}\,\tau_\alpha R_{0}(z).
\end{equation}

Let \( b_{jk}(\alpha,z) \) denote the \((j,k)\)-th matrix coefficient of \( B(\alpha,z) \) with respect to the standard orthonormal basis \( \{e_1,\dots,e_n\} \) of \( \mathbb{C}^n \), and let \( \tilde{b}_{jk}(\alpha,z) \) denote the \((j,k)\)-th entry of the corresponding cofactor matrix. The next lemma relates the matrix elements of the resolvents of \( H_\alpha \) and \( H_0 \).
\begin{lemma}\label{newlemma}
    For $v,w\in L^2(\RR^3)$ and $\Im z\neq0$, we have 
    \begin{equation}\label{R1}
        \langle  R_\alpha(z)v,w\rangle =\langle R_0(z)v,w\rangle-\frac{1}{\det\,B(\alpha,z)}\sum_{j,k=1}^n\sqrt{\alpha_j}\sqrt{\alpha_k}\tilde{b}_{jk}(\alpha,z)\langle R_0(z)v,u_j\rangle \langle R_0(z)u_k,w\rangle.
    \end{equation}
\end{lemma}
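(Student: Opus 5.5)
We will obtain \eqref{R1} by combining the resolvent identity \eqref{comp101} with the reduced relation \eqref{reducedresolventrelation}, and then writing $B(\alpha,z)^{-1}$ through the cofactor (adjugate) formula. Fix $v,w\in L^2(\RR^3)$ and $\Im z\neq 0$. Taking the inner product of \eqref{comp101} applied to $v$ with $w$ gives
\[
\langle R_\alpha(z)v,w\rangle=\langle R_0(z)v,w\rangle-\langle R_0(z)\tau_\alpha^*\tau_\alpha R_\alpha(z)v,w\rangle .
\]
Since $R_0(z)^*=R_0(\bar z)$, the last term equals $\langle \tau_\alpha R_\alpha(z)v,\ \tau_\alpha R_0(\bar z)w\rangle_{\mathbb C^n}$. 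By \eqref{reducedresolventrelation}, $\tau_\alpha R_\alpha(z)v=B(\alpha,z)^{-1}\tau_\alpha R_0(z)v$, and $B(\alpha,z)$ is invertible for $\Im z\neq0$, so $\det B(\alpha,z)\neq 0$.

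Next we write $B(\alpha,z)^{-1}=\frac{1}{\det B(\alpha,z)}\,\widetilde B(\alpha,z)^{\top}$, where $\widetilde B$ is the cofactor matrix; thus the $(k,j)$ entry of $B^{-1}$ is $\tilde b_{jk}/\det B$. From \eqref{tau}, the $j$-th component of $\tau_\alpha R_0(z)v$ is $\sqrt{\alpha_j}\langle R_0(z)v,u_j\rangle$. The $k$-th component of $\tau_\alpha R_0(\bar z)w$ is $\sqrt{\alpha_k}\langle R_0(\bar z)w,u_k\rangle$, whose complex conjugate is $\sqrt{\alpha_k}\langle u_k,R_0(\bar z)w\rangle=\sqrt{\alpha_k}\langle R_0(z)u_k,w\rangle$. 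Expanding the $\mathbb C^n$ inner product (linear in the first slot) therefore gives
\[
\frac{1}{\det B(\alpha,z)}\sum_{j,k=1}^n \tilde b_{jk}(\alpha,z)\sqrt{\alpha_j}\langle R_0(z)v,u_j\rangle\,\sqrt{\alpha_k}\langle R_0(z)u_k,w\rangle,
\]
which is exactly the correction term in \eqref{R1}.

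The only delicate point is the bookkeeping: we must keep track of the transpose in the adjugate formula together with the conjugation produced by moving $R_0(z)$ to the other side of the inner product, so that the indices pair up as $\tilde b_{jk}$ with $u_j$ attached to $v$ and $u_k$ attached to $w$. If the indices come out swapped, we will check the convention that $\tilde b_{jk}$ is the cofactor of the $(j,k)$ entry, so that $(B^{-1})_{kj}=\tilde b_{jk}/\det B$. Everything else is direct substitution.
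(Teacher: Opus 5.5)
Your proof is correct and follows the same route as the paper. You apply \eqref{comp101}, move $R_0(z)$ across as $R_0(\bar z)$ to obtain $-\langle B(\alpha,z)^{-1}\tau_\alpha R_0(z)v,\tau_\alpha R_0(\bar z)w\rangle$, and expand; your explicit bookkeeping, $(B^{-1})_{kj}=\tilde b_{jk}/\det B$ together with $\overline{\langle R_0(\bar z)w,u_k\rangle}=\langle R_0(z)u_k,w\rangle$, is exactly the step the paper compresses into ``by the definition of $\tau_\alpha$,'' and it yields the indices in \eqref{R1} correctly.
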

\begin{proof}
    By \eqref{comp101} and \eqref{reducedresolventrelation},   \begin{align*}\langle R_\alpha(z)v,w\rangle-\langle R_0(z)v,w\rangle&=-\langle R_0(z)\tau_\alpha^*\tau_\alpha R_\alpha(z)v,w\rangle=- \langle \tau_\alpha R_\alpha(z)v,\tau_\alpha R_0(\overline{z})w\rangle\\&=-\langle B(\alpha,z)^{-1} \tau_\alpha R_0(z)v,\tau_\alpha R_0(\overline{z})w\rangle.\end{align*}
By the definition of the linear map $\tau_\alpha$, \eqref{R1} follows.
\end{proof}
\noindent Now by \eqref{ch5-1eqn1} and the fact that $\tau_\alpha^{*}e_j =\sqrt{\alpha_j} u_j$, \begin{align*}
b_{jk}(\alpha,z)&=\langle B(\alpha,z)e_k,\,e_j\rangle
=
\langle e_k,\,e_j\rangle
+
\langle \tau_\alpha R_{0}(z)\tau_\alpha^{*} e_k,\, e_j\rangle \\
&=
\delta_{kj}
+
\langle R_{0}(z)\tau_\alpha^{*} e_k,\, \tau_\alpha^{*} e_j\rangle =
\delta_{jk}
+
\sqrt{\alpha_j}\sqrt{\alpha_k}\langle R_{0}(z)u_k,\,u_j\rangle.
\end{align*}
\noindent Since $u_j \in \mathcal{E}$ for each $j$,
it follows from~\eqref{eqn-laplacianresolventlimit} that for any $\lambda\in\RR$
\begin{equation}\label{uiresolventlimit}
\lim_{\epsilon \to 0^{+}}
b_{jk}(\alpha,\lambda \pm \iota\epsilon)
=
\left(\delta_{jk}
+
\sqrt{\alpha_j}\sqrt{\alpha_k}\,\gamma(\eta_{kj},\lambda)\right)
\pm
\iota\pi\sqrt{\alpha_j}\sqrt{\alpha_k}\,\eta_{kj}(\lambda).
\end{equation} 
\noindent{\bf{Definition of $B(\alpha,\lambda\pm\iota0)$:}} For $\alpha\in\RR^n_+$ and $\lambda \in \mathbb{R}$, define the operator {$B(\alpha,\lambda\pm\iota0)$} whose matrix coefficients are given by $b_{jk}(\alpha,\lambda \pm \iota 0):=\lim_{\epsilon\to0^+}b_{jk}(\alpha,\lambda \pm \iota \epsilon)$.\\

\noindent By definition, 
$B(\alpha,\lambda+\iota0)
=
C(\alpha,\lambda) + \iota\, D(\alpha,\lambda),$
where $C(\alpha,\lambda)$ and $D(\alpha,\lambda)$ are self-adjoint operators with matrix coefficients
\begin{equation}
c_{jk}(\alpha,\lambda)
=
\delta_{jk}
+
\sqrt{\alpha_j}\sqrt{\alpha_k}\,\gamma(\eta_{kj},\lambda)\quad\text{and}\quad
d_{jk}(\alpha,\lambda)
=
\sqrt{\alpha_j}\sqrt{\alpha_k}\pi\,\eta_{kj}(\lambda).
 \end{equation} By \eqref{ch5-1eqn1}, we have $B(\alpha,z)^*=B(\alpha,\overline{z})$ for $\Im z\neq0$ and hence \[B(\alpha,\lambda-\iota0)=B(\alpha,\lambda+\iota0)^*=C(\alpha,\lambda)-\iota D(\alpha,\lambda).\]
 We denote by $\tilde b_{jk}(\alpha,\lambda\pm\iota0)$ and $\tilde c_{jk}(\alpha,\lambda)$, the $(j,k)$-th entries of the cofactor matrices
associated with $B(\alpha,\lambda\pm\iota0)$ and $C(\alpha,\lambda)$ respectively.
Define
\[
F(\alpha,\lambda)
:=
\det B(\alpha,\lambda+\iota0)
=
F_{1}(\alpha,\lambda) + \iota\, F_{2}(\alpha,\lambda),
\]
where $F_{1}(\alpha,\lambda) := \Re \det B(\alpha,\lambda+\iota 0)$ and
$F_{2}(\alpha,\lambda) := \Im \det B(\alpha,\lambda+\iota 0)$.

Note that for \(v,w\in \mathcal E\), by \eqref{R1} and \eqref{eqn-laplacianresolventlimit}, the boundary value
\[
\langle R_\alpha(\lambda\pm\iota0)v,w\rangle
:=
\lim_{\epsilon\to0^+}\langle R_\alpha(\lambda\pm\iota\epsilon)v,w\rangle
\]
exists if and only if \(B(\alpha,\lambda\pm\iota0)\) is invertible.

%


 On the other hand if the \(\operatorname{Rank} B(a,\lambda_0 + i0) = n - k\) for some fixed \(a =(a_1,a_2,\cdots,a_n)\in \mathbb{R}^n_+\), \(\lambda_0 > 0\), and \(0 < k \leq n\) and \(u_{j,\lambda_0} = 0\) for all \(j\), then \(D(a,\lambda_0) = 0\), which implies \(\operatorname{Rank} C(a,\lambda_0) = n - k\). Since \(C(a,\lambda_0)\) is self-adjoint, the spectral theorem yields a unitary matrix \(W = (w_{ij})_{i,j=1}^n\) such that
\begin{equation}\label{W}
W^* C(a,\lambda_0) W = \operatorname{diag}(0, \dots, 0, \mu_{k+1}, \mu_{k+2}, \dots, \mu_n),
\end{equation}
where \(\mu_{k+1}, \dots, \mu_n\) are the nonzero eigenvalues of \(C(a,\lambda_0)\), counted according to multiplicity. Let \(\mu := \mu_{k+1} \cdots \mu_n\) be the product of the nonzero eigenvalues of \(C(a,\lambda_0)\), and let \(\phi_j := \Phi_{u_j, \lambda_0}\) for $1\leq j\leq n$. 

We have the following lemma which may be of independent interest.  

\begin{lemma}\label{evlemma}
Assume that \(\operatorname{Rank} B(a,\lambda_0 + i0) =n - k\) for some fixed \(a \in \mathbb{R}^n_+\), \(\lambda_0 > 0\), and \(0 < k \leq n\), and \(u_{j,\lambda_0} = 0\) for all \(j\). Then \(\lambda_0\) is an eigenvalue of \(H_a\) of multiplicity \(k\), and the corresponding eigenspace is spanned by the vectors
\[
v_j := \sum_{l=1}^n \sqrt{a_l}\,w_{lj}\,\phi_l, \qquad j = 1, \dots, k.
\] 
Furthermore $v_j\in\mathcal E$ for all $j.$
\end{lemma}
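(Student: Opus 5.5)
We work in the spectral representation $U$ of \eqref{iso}, where $H_0$ acts as multiplication by $\lambda$. The plan has three steps: show that each $v_j$ is an eigenvector, show that the $v_j$ span a $k$-dimensional space, and show that every eigenvector lies in that span.

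\textbf{Step 1: each $v_j$ is an eigenvector.} The first observation is that $v_j\in\mathcal E$. Indeed $\phi_l=\Phi_{u_l,\lambda_0}\in\mathcal E$, as noted after \eqref{modified}, and $\mathcal E$ is a linear space. Moreover $(\phi_l)_\lambda=u_{l,\lambda}/(\lambda-\lambda_0)$ for $\lambda\ne\lambda_0$, and $\phi_l$ has polynomial-weighted integrability, so $\phi_l\in H^2(\RR^3)$ and
\[
(H_0-\lambda_0)\phi_l=u_l .
\]
Hence
\[
(H_a-\lambda_0)v_j=\sum_l \sqrt{a_l}\,w_{lj}\,u_l+\sum_{m} a_m\langle v_j,u_m\rangle u_m .
\]
Next we compute the inner products. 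Using $u_{m,\lambda_0}=0$ and the Plemelj formula \eqref{eqn-laplacianresolventlimit} with $\eta_{lm}(\lambda_0)=0$, we get
\[
\langle \phi_l,u_m\rangle=\int\frac{\eta_{lm}(\lambda)}{\lambda-\lambda_0}\,d\lambda=\gamma(\eta_{lm},\lambda_0).
\]
It follows that $\sqrt{a_m}\langle v_j,u_m\rangle=\sum_l \sqrt{a_m a_l}\,\gamma(\eta_{lm},\lambda_0)\,w_{lj}$. The coefficient of $\sqrt{a_m}\,u_m$ in $(H_a-\lambda_0)v_j$ is therefore
\[
w_{mj}+\sum_l\bigl(c_{ml}(a,\lambda_0)-\delta_{ml}\bigr)w_{lj}=\bigl(C(a,\lambda_0)W\bigr)_{mj}.
\]
By \eqref{W} this vanishes for $j\le k$. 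So $H_a v_j=\lambda_0 v_j$.

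\textbf{Step 2: linear independence.} Apply $(H_0-\lambda_0)$ to a combination $\sum_j\beta_j v_j=0$. This gives $\sum_l \sqrt{a_l}\bigl(\sum_j w_{lj}\beta_j\bigr)u_l=0$. The $u_l$ are orthogonal and nonzero, and $a_l>0$, so $W\beta=0$, and unitarity of $W$ gives $\beta=0$.

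\textbf{Step 3: every eigenvector is in the span.} This is the main obstacle. Let $H_a f=\lambda_0 f$ and put $c_m=a_m\langle f,u_m\rangle$. Then
\[
(\lambda-\lambda_0)f_\lambda=-\sum_m c_m u_{m,\lambda}\quad\text{for a.e. }\lambda,
\]
so for $\lambda\neq\lambda_0$ we have $f_\lambda=-\sum_m c_m(\phi_m)_\lambda$. Since $\{\lambda_0\}$ is a null set, this gives $f=-\sum_m c_m\phi_m$ in $L^2$. No extra vanishing condition is needed here, because $u_{m,\lambda_0}=0$ already makes $\phi_m\in L^2$.

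Substituting back into the definition of $c_m$ and using the inner products computed in Step 1 yields a linear system. Writing $x_l=c_l/\sqrt{a_l}$, we get $x_m+\sum_l\sqrt{a_m a_l}\,\gamma(\eta_{lm},\lambda_0)\,x_l=0$, that is, $C(a,\lambda_0)x=0$ (up to the index conventions of $c_{jk}$). Thus $x\in\ker C(a,\lambda_0)$, which is spanned by the first $k$ columns of $W$. Then $f=-\sum_l\sqrt{a_l}\,x_l\phi_l$ is a combination of the $v_j$.

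The delicate points to check in this step are the index and conjugation conventions: $\eta_{kj}$ versus $\eta_{jk}$, and whether one ends up with $C$ or $\overline{C}=C^\top$. Since $C$ is self-adjoint, $\ker C^\top=\overline{\ker C}$. I would verify the conventions carefully so that the kernel matches the columns of $W$ exactly.

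Combining the steps, the eigenspace has dimension exactly $k$ and is spanned by $v_1,\dots,v_k$, which all lie in $\mathcal E$.
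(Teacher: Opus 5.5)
Your proposal is correct and follows the paper's route. The paper identifies $\ker C(a,\lambda_0)=\ker B(a,\lambda_0+\iota0)$ with the span of the first $k$ columns of $W$, and then invokes the Birman--Schwinger-type correspondence $x\mapsto\Phi_{\tau_a^*x,\lambda_0}=\sum_l\sqrt{a_l}\,x_l\,\phi_l$ from \cite[Theorem~4.2]{LS2}; your Steps 1--3 verify that correspondence by hand. Your convention worry in Step 3 is unnecessary: the same computation as in Step 1 gives exactly $\sum_l c_{ml}(a,\lambda_0)\,x_l=0$ with $c_{ml}=\delta_{ml}+\sqrt{a_ma_l}\,\gamma(\eta_{lm},\lambda_0)$, i.e.\ $C(a,\lambda_0)x=0$, with no transpose or conjugate.
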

\begin{proof}
By the preceding discussion and \eqref{W}, we have
\[
(w_{1j}, w_{2j}, \dots, w_{nj})^\top \in \ker C(a,\lambda_0) = \ker B(a,\lambda_0 + i0), \qquad j = 1, \dots, k.
\]
Then by an argument similar to that of Birman-Schwinger-type principle \cite[Theorem~4.2]{LS2}, the vectors $
v_j := \Phi_{\tau_a^*(w_{1j}, \dots, w_{nj})^\top, \lambda_0}$, $j = 1, \dots, k$
form an eigenbasis of \(H_a\) corresponding to the eigenvalue \(\lambda_0\). We observe that $v_j = \Phi_{\tau_a^*(w_{1j}, \dots, w_{nj})^\top, \lambda_0}
= \sum_{l=1}^n \sqrt{a_l} \, w_{lj} \, \Phi_{u_l, \lambda_0}$ and the lemma follows.
\end{proof}
We next derive the asymptotic behaviour of \(F_1(\alpha,\lambda)\) and \(F_2(\alpha,\lambda)\) as \(\lambda \to \lambda_0\); this result will be used later.

Let \(r\) be the largest integer such that \(u_{j,\lambda}\) vanishes to order at least \(r\) at \(\lambda = \lambda_0\) for every \(j = 1, \dots, n\). Then $d_{jk}(\alpha,\lambda)
= \sqrt{\alpha_j \alpha_k} \, \pi \, \eta_{kj}(\lambda)
= O\!\bigl(|\lambda - \lambda_0|^{2r}\bigr)$ as $\lambda \to \lambda_0$,
uniformly for \(\alpha\) in compact subsets of \(\mathbb{R}^n_+\). Thus by the multilinearity of the determinant with respect to the columns, we have:
\begin{equation} \label{ch5-lemF1F2expression(c)}
\tilde{b}_{jk}(\alpha,\lambda \pm i0) = \tilde{c}_{jk}(\alpha,\lambda) + O(|\lambda - \lambda_0|^{2r})
\qquad \text{as } \lambda \to \lambda_0,
\end{equation}
\begin{equation} \label{ch5-lemF1F2expression(a)}
F_1(\alpha,\lambda) = \det C(\alpha,\lambda) + O(|\lambda - \lambda_0|^{4r})
\qquad \text{as } \lambda \to \lambda_0,
\end{equation}
\begin{equation} \label{ch5-lemF1F2expression(b)}
F_2(\alpha,\lambda) = \sum_{j,k=1}^n d_{jk}(\alpha,\lambda) \tilde{c}_{jk}(\alpha,\lambda) + O(|\lambda - \lambda_0|^{4r})
\qquad \text{as } \lambda \to \lambda_0,
\end{equation}
uniformly for \(\alpha\) in compact subsets of \(\mathbb{R}^n_+\).

\section{Resonance under multi-parameter perturbation of a Simple embedded eigenvalue}\label{4}
We fix a model $H_{\alpha}=H_{0}+ V_\alpha$ on $L^{2}(\mathbb{R}^{3})$
such that $\lambda_{0}$ is a simple embedded eigenvalue of
$H_{{a}}$ which disappears into the absolutely continuous spectrum
under perturbation. We study the resonance phenomenon by analyzing the spectral density of $H_\alpha$ near $\lambda_0$ as $\alpha\to a$. The scattering cross-section and time delay for the pair $(H_0,H_\alpha)$ near $\lambda_0$ as $\alpha\to a$ is also studied.   

Let \(a\in \mathbb{R}^n_+\) and \(\lambda_0>0\) be fixed, and suppose that $
\operatorname{Rank} B(a,\lambda_0+\iota0)=n-1
$ and that $u_{j,\lambda_0}=0$ for any $j.$ Then by Lemma \ref{evlemma}, it follows that \(\lambda_0\) is a simple eigenvalue of \(H_a\) with correspoding eigenvector $\phi=\sum_{j=1}^n \sqrt{a_j}\,w_{j1}\,\phi_j$.

For \(\alpha\in\mathbb{R}_+^n\), define
$
\mathfrak{C}(\alpha,\lambda)
=
(\mathfrak c_{jk}(\alpha,\lambda))_{j,k=1}^n
:=
W^{*}C(\alpha,\lambda)W.$
Consequently,
\begin{equation}\label{H1}
\mathfrak c_{jk}(\alpha,\lambda)
=
\sum_{l,m=1}^n
\overline{w_{lj}}\,c_{lm}(\alpha,\lambda)\,w_{mk}.
\end{equation} 
Furthermore, $\operatorname{adj}\left(\mathfrak{C}(\alpha,\lambda)\right)=W^{*}\operatorname{adj}\left(C(\alpha,\lambda)\right)W$ where \(\operatorname{adj}\) denotes the adjugate matrix. This implies \begin{equation}\label{H3}{ \tilde c_{jk}}({\alpha},\lambda)=\sum_{l,m=1}^n \overline{w_{jl}}\tilde{\mathfrak c}_{lm}(\alpha,\lambda)w_{km} \end{equation}where \(\tilde{\mathfrak c}_{lm}(\alpha,\lambda)\) denotes the \((l,m)\)-cofactor of \(\mathfrak C(\alpha,\lambda)\). Using the fact that
\[
\tilde{\mathfrak c}_{lm}(a,\lambda_0)
=
\begin{cases}
\mu, & l=m=1,\\
0, & \text{otherwise},
\end{cases}
\]
we obtain
\begin{equation}\label{H2}
\tilde c_{jk}(a,\lambda_0)
=
\mu\,\overline{w_{j1}}\,w_{k1}.
\end{equation}
 
\noindent In the next lemma, we analyze the zeros of the function $F_1(\alpha,\cdot)$ for $\alpha$ near ${a}$.

\begin{lemma}\label{ch5-lambdaderivative}
Suppose that \(u_{j,\lambda_0}=0\) for all \(j\), and that
$\operatorname{Rank} B({a},\lambda_{0}+\iota0)=n-1$. Then there exist an open ball \(I\subset \mathbb{R}_+^n\) centered at \(a\), an open interval \(J\subset(0,\infty)\) containing \(\lambda_0\), and a unique smooth function $\lambda:I\to J$ such that $
\lambda(a)=\lambda_0 \text{ and } 
F_1(\alpha,\lambda(\alpha))=0 \text{ for all }\alpha\in I.
$

\end{lemma}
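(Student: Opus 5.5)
Apply the implicit function theorem to $F_1$ on $\mathbb{R}^n_+\times(0,\infty)$ near $(a,\lambda_0)$. Three things are needed: $F_1$ is smooth (at least $C^1$) near $(a,\lambda_0)$, $F_1(a,\lambda_0)=0$, and $\partial_\lambda F_1(a,\lambda_0)\neq0$.

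\textbf{Smoothness.} Each $\eta_{kj}$ lies in $\mathcal S(\mathbb R)$. The Cauchy principal value $\lambda\mapsto\gamma(\eta_{kj},\lambda)$ is then smooth on $\mathbb R$ (see \cite[Proposition 2.2]{LS1}). Hence $c_{jk}(\alpha,\lambda)$ and $d_{jk}(\alpha,\lambda)$ are smooth in $(\alpha,\lambda)\in\mathbb R^n_+\times(0,\infty)$, since $\sqrt{\alpha_j\alpha_k}$ is smooth on $\mathbb R^n_+$. The determinant is a polynomial in the entries, so $F(\alpha,\lambda)=\det(C+\iota D)$ is smooth, and so are its real part $F_1$ and imaginary part $F_2$.

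\textbf{Vanishing at $(a,\lambda_0)$.} Since $u_{j,\lambda_0}=0$ for all $j$, the vanishing order satisfies $r\ge1$. By \eqref{ch5-lemF1F2expression(a)}, $F_1(a,\lambda_0)=\det C(a,\lambda_0)$. This determinant is zero because $\operatorname{Rank}C(a,\lambda_0)=n-1$.

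\textbf{Nonzero $\lambda$-derivative.} We have $F_1(\alpha,\lambda)=\det C(\alpha,\lambda)+O(|\lambda-\lambda_0|^{4r})$ with $4r\ge 4$. The remainder is smooth (it is $F_1-\det C$) and vanishes to order at least $2$ at $\lambda_0$, so its $\lambda$-derivative at $\lambda_0$ is zero. Hence $\partial_\lambda F_1(a,\lambda_0)=\partial_\lambda\det C(a,\lambda_0)$. Jacobi's formula gives
\[
\partial_\lambda \det C(a,\lambda)\big|_{\lambda_0}=\sum_{j,k}\tilde c_{jk}(a,\lambda_0)\,\partial_\lambda c_{jk}(a,\lambda_0).
\]
From \eqref{H2}, $\tilde c_{jk}(a,\lambda_0)=\mu\,\overline{w_{j1}}w_{k1}$. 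From \eqref{ch5-lem1(c)}, $\partial_\lambda c_{jk}(a,\lambda_0)=\sqrt{a_ja_k}\,\langle\phi_k,\phi_j\rangle$. Therefore
\[
\partial_\lambda F_1(a,\lambda_0)=\mu\sum_{j,k}\sqrt{a_ja_k}\,\overline{w_{j1}}w_{k1}\langle\phi_k,\phi_j\rangle=\mu\,\Big\|\sum_k\sqrt{a_k}\,w_{k1}\phi_k\Big\|^2=\mu\,\|\phi\|^2 .
\]
Here $\phi$ is the eigenvector from Lemma \ref{evlemma}, so $\phi\neq0$. Also $\mu\neq0$, being a product of nonzero eigenvalues. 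Hence $\partial_\lambda F_1(a,\lambda_0)\neq0$.

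\textbf{Conclusion and main difficulty.} The implicit function theorem now yields an open ball $I$ centred at $a$ (shrunk to lie in $\mathbb R^n_+$), an interval $J\ni\lambda_0$ (shrunk to lie in $(0,\infty)$), and a unique smooth $\lambda:I\to J$ with $\lambda(a)=\lambda_0$ and $F_1(\alpha,\lambda(\alpha))=0$. The step needing most care is the index bookkeeping in the derivative computation. One must check that the conjugation conventions in \eqref{H2} and \eqref{ch5-lem1(c)} pair up into $\|\phi\|^2$ rather than some non-positive expression. This requires that $\eta_{kj}(\lambda)=\langle u_{k,\lambda},u_{j,\lambda}\rangle$ and $\langle\Phi_{u_k},\Phi_{u_j}\rangle$ have the same ordering, which they do by the definitions.
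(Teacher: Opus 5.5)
Your proof is correct and follows the paper's argument: you apply the implicit function theorem using $F_1(a,\lambda_0)=0$ and $\partial_\lambda F_1(a,\lambda_0)=\mu\|\phi\|^2\neq 0$. The difference is cosmetic. You differentiate $\det C$ via Jacobi's formula and the cofactor identity \eqref{H2}, whereas the paper differentiates $\det\mathfrak C$ at the diagonal point to get $\mu\,\partial_\lambda\mathfrak c_{11}$, which is the same computation. You also make explicit two points the paper leaves implicit: that $F_1$ is smooth, and why $\partial_\lambda F_1=\partial_\lambda\det C$ at $(a,\lambda_0)$.
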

\begin{proof}
Since \(C(\alpha,\lambda)\) and \(\mathfrak C(\alpha,\lambda)\) are unitarily equivalent, by \eqref{H1}, \[
\frac{\partial F_1}{\partial\lambda}(a,\lambda_0)
=
\left.\frac{\partial}{\partial\lambda}\det \mathfrak C(\alpha,\lambda)\right|_{(a,\lambda_0)}=
\mu\,\frac{\partial \mathfrak c_{11}}{\partial\lambda}(a,\lambda_0)
=\mu\sum_{l,m=1}^n
\overline{w_{l1}}
\frac{\partial c_{lm}}{\partial\lambda}(a,\lambda_0)
w_{m1}.
\]
Thus by \eqref{ch5-lem1(c)}, we obtain 
          \begin{equation} \label{pathderivative}  \frac{\partial F_1}{\partial\lambda}({a},\lambda_0)=\mu\sum_{l,m=1}^n \sqrt{a_m}\sqrt{a_l}\langle \phi_m,\phi_l\rangle \overline{w_{l1}}w_{m1}=\mu\|\phi\|^2\neq 0.  \end{equation}   Since \(F_1(a,\lambda_0)=0\), the proof follows by the implicit function theorem.  
\end{proof}

\subsection{The model} 

Along with hypotheses of Lemma~\ref{ch5-lambdaderivative}, if we assume that
$
F_{2}(\alpha,\lambda(\alpha)) \neq 0
\text{ for all } \alpha \in I\setminus\{a\},$
then by an argument similar to that of \cite[Theorem 4.7 (a)]{LS2}, it follows that \(F(\alpha,\lambda)\neq 0\) for all \(\alpha\in I\setminus\{a\}\) and \(\lambda\in J\). Furthermore, for each \(\alpha \in I\setminus\{a\}\), the spectrum of \(H_{\alpha}\) in \(J\) is purely absolutely continuous, while the spectrum of \(H_a\) is purely absolutely continuous in \(J\setminus\{\lambda_0\}\). 

This motivates us to fix a model $H_{\alpha}=H_{0}+ V_\alpha$ on $L^{2}(\mathbb{R}^{3})$
such that $\lambda_{0}$ is a simple embedded eigenvalue of
$H_{{a}}$ which disappears into the absolutely continuous spectrum
under perturbation. 
 
\begin{definition}[The model]\label{model}
Let \(a\in \mathbb{R}_+^n\) and \(\lambda_0>0\) be fixed. For \(\alpha\in\mathbb{R}_+^n\), consider the operator
\begin{equation}\label{model22}
H_\alpha=H_0+V_\alpha
\quad \text{on } L^2(\mathbb{R}^3),
\end{equation}
where
$V_\alpha=\sum_{j=1}^n \alpha_j \langle \cdot,u_j\rangle u_j$,
and assume the following:
\begin{enumerate}[label=(\alph*)]
    \item The family \(\{u_j\}_{j=1}^n\subset \mathcal E\) is orthogonal.
    
    \item \(u_{j,\lambda_0}=0\) for all \(j=1,\dots,n\) (equivalently, \(D(a,\lambda_0)=0\)).
    
    \item \(\operatorname{Rank} B(a,\lambda_0+\iota0)=n-1\).
    
    \item \(F_2(\alpha,\lambda(\alpha))\neq 0\) for all \(\alpha\in I\setminus\{a\}\), where \(I\), \(J\), and \(\lambda:I\to J\) are as in Lemma~$\ref{ch5-lambdaderivative}$.
\end{enumerate}
\end{definition}

In the subsequent subsections, all the results will be with respect to this model.

\subsection{Spectral density and its asymptotic behaviour} 

The operator \(H_\alpha\) defined in Definition \ref{model} is purely absolutely continuous in the interval \(J\) for all
\(\alpha\in I\setminus\{a\}\). For \(v,w\in L^2(\mathbb{R}^3)\), let \(\rho_\alpha^{v,w}\) denote the spectral density on \(J\) associated with the spectral measure \(\langle E_\alpha(d\lambda)v,w\rangle\). Then for \(v,w\in\mathcal E\), the density \(\rho_\alpha^{v,w}\) is given by

\begin{equation}\label{R9}
\rho_\alpha^{v,w}(\lambda)
=
\frac{1}{2\pi\iota}
\left(
\langle R_\alpha(\lambda+\iota0)v,w\rangle
-
\langle R_\alpha(\lambda-\iota0)v,w\rangle
\right),
\qquad \lambda\in J.
\end{equation}
 
Now to exhibit the Breit--Wigner-type behaviour of \(\rho_\alpha^{v,w}\)  near the embedded eigenvalue \(\lambda_0\) as \(\alpha\to a\), define the scaling function \(\kappa:I\to\mathbb R\) by
\begin{equation}\label{ch5-eqn-kappa}
\kappa(\alpha)
:=
\frac{F_2(\alpha,\lambda(\alpha))}{\mu\|\phi\|^2}.
\end{equation}
Then \(\kappa(a) = 0\) and \(\kappa(\alpha) > 0\) for all \(\alpha\) sufficiently close to \(a\) with \(\alpha \neq a\). Furthermore
\begin{equation}\label{ch5-kappaorder}
\kappa(\alpha)=O(|\alpha-a|^{2r})\qquad\text{as }\alpha\to a
\end{equation}where \(r\) is the largest integer such that \(u_{j,\lambda}\) vanishes to order at least \(r\) at \(\lambda = \lambda_0\) for any $j.$
 For any fixed \(h\in\mathbb R\), define the scaled and translated spectral parameter
\[ 
\lambda_h(\alpha)
=
\lambda(\alpha)+h\,\kappa(\alpha),
\qquad \alpha\in I.
\]Next, observe that since \(F_1(a,\lambda_0) = 0\), for any fixed \(h \in \mathbb{R}\), by \eqref{pathderivative}, we have
\begin{equation}\label{eqn-F1limit1}
\lim_{\alpha \to a} \frac{F_1(\alpha, \lambda_h(\alpha))}{\kappa(\alpha)}
=h\frac{\partial F_1}{\partial\lambda}(a,\lambda_0)= h \mu \|\phi\|^2.\end{equation} Similarly using \eqref{ch5-eqn-kappa}, we have\begin{equation}\label{eqn-F2limit1}
       \lim\limits_{\alpha\to{a}}\dfrac{F_2(\alpha,\lambda_h(\alpha))}{\kappa(\alpha)}=\mu\|\phi\|^2.
    \end{equation} 
The following theorem describes the asymptotic behaviour of the spectral density of $H_\alpha$ in terms of the Cauchy distribution as $\alpha\to a$.

\begin{theorem}\label{cmpthm1}
For any fixed \(h \in \mathbb{R}\) and \(v, w \in \mathcal{E}\), we have
\begin{equation}\label{uidensityasy1}
\lim_{\alpha\to{a}} \kappa(\alpha) \rho^{v,w}_\alpha(\lambda_h(\alpha))
= \frac{\langle P_{\lambda_0} v, w \rangle}{\pi}  \frac{1}{h^2 + 1},
\end{equation}
where \(P_{\lambda_0}\) denotes the eigenprojection corresponding to the eigenvalue \(\lambda_0\) of \(H_{{a}}\).

\end{theorem}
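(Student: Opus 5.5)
We start from \eqref{R9} and Lemma~\ref{newlemma}. The free parts cancel up to the smooth term $\eta_{v,w}$, so we get
\[
\rho^{v,w}_\alpha(\lambda)=\eta_{v,w}(\lambda)-\frac{1}{2\pi\iota}\sum_{j,k}\sqrt{\alpha_j\alpha_k}\left[\frac{\tilde b_{jk}(\alpha,\lambda+\iota0)}{F(\alpha,\lambda)}X_{jk}^+(\lambda)-\frac{\tilde b_{jk}(\alpha,\lambda-\iota0)}{\overline{F(\alpha,\lambda)}}X_{jk}^-(\lambda)\right],
\]
where $X^\pm_{jk}(\lambda)=\langle R_0(\lambda\pm\iota0)v,u_j\rangle\langle R_0(\lambda\pm\iota0)u_k,w\rangle$. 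By \eqref{eqn-laplacianresolventlimit}, $X^\pm_{jk}(\lambda)=(\gamma(\eta_{v,u_j},\lambda)\pm\iota\pi\eta_{v,u_j}(\lambda))(\gamma(\eta_{u_k,w},\lambda)\pm\iota\pi\eta_{u_k,w}(\lambda))$. Since $u_{j,\lambda_0}=0$, we have $\eta_{v,u_j}(\lambda_0)=0$, so $X^\pm_{jk}$ are continuous and $X^\pm_{jk}(\lambda_0)=\gamma(\eta_{v,u_j},\lambda_0)\gamma(\eta_{u_k,w},\lambda_0)=:X_{jk}$.

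We multiply by $\kappa(\alpha)$ and evaluate at $\lambda=\lambda_h(\alpha)\to\lambda_0$. The term $\kappa\eta_{v,w}$ tends to $0$ because $\kappa(a)=0$. For the remaining terms, \eqref{ch5-lemF1F2expression(c)} gives $\tilde b_{jk}(\alpha,\lambda_h\pm\iota0)\to\tilde c_{jk}(a,\lambda_0)=\mu\overline{w_{j1}}w_{k1}$ by \eqref{H2}. Also, by \eqref{eqn-F1limit1}–\eqref{eqn-F2limit1}, $F(\alpha,\lambda_h(\alpha))/\kappa(\alpha)\to\mu\|\phi\|^2(h+\iota)$. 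Hence
\[
\kappa\rho\to-\frac{1}{2\pi\iota}\,\frac{1}{\|\phi\|^2}\Big(\frac{1}{h+\iota}-\frac{1}{h-\iota}\Big)\sum_{j,k}\sqrt{a_ja_k}\,\overline{w_{j1}}w_{k1}X_{jk}
=\frac{1}{\pi(h^2+1)}\,\frac{1}{\|\phi\|^2}\sum_{j,k}\sqrt{a_ja_k}\,\overline{w_{j1}}w_{k1}X_{jk},
\]
using $\frac{1}{h+\iota}-\frac{1}{h-\iota}=\frac{-2\iota}{h^2+1}$. The factor $\mu$ cancels between numerator and denominator.

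It remains to identify the sum with $\langle P_{\lambda_0}v,w\rangle=\langle v,\phi\rangle\langle\phi,w\rangle/\|\phi\|^2$, which is the main step. We need, for $v\in\mathcal E$ and $u$ with $u_{\lambda_0}=0$,
\[
\gamma(\eta_{v,u},\lambda_0)=\langle v,\Phi_{u,\lambda_0}\rangle .
\]
This holds formally because $\gamma(\eta_{v,u},\lambda_0)$ is the principal value of $\int\langle v_\lambda,u_\lambda\rangle/(\lambda-\lambda_0)\,d\lambda$. The integrand is regular at $\lambda_0$ since $u_{\lambda_0}=0$, so the principal value is an ordinary integral and equals $\int\langle v_\lambda,(\Phi_{u,\lambda_0})_\lambda\rangle d\lambda$; here $(\lambda-\lambda_0)$ is real, so no conjugation issue arises. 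The same argument gives $\gamma(\eta_{u,w},\lambda_0)=\langle\Phi_{u,\lambda_0},w\rangle$.

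With $\phi=\sum_j\sqrt{a_j}w_{j1}\phi_j$, we then get
\[
\sum_j\sqrt{a_j}\,\overline{w_{j1}}\,\gamma(\eta_{v,u_j},\lambda_0)=\langle v,\phi\rangle,\qquad \sum_k\sqrt{a_k}\,w_{k1}\,\gamma(\eta_{u_k,w},\lambda_0)=\langle\phi,w\rangle .
\]
The sum therefore factorizes as $\langle v,\phi\rangle\langle\phi,w\rangle$, which gives \eqref{uidensityasy1}.

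The delicate points are index and conjugation bookkeeping: the convention $\eta_{kj}$ versus $b_{jk}$, and whether $\overline{w_{j1}}$ or $w_{j1}$ pairs with $v$. These must be checked so that the two factors are exactly $\langle v,\phi\rangle$ and $\langle\phi,w\rangle$. We also need uniformity of the $O(\cdot)$ error terms along $\lambda_h(\alpha)\to\lambda_0$, which follows from the uniformity stated in \eqref{ch5-lemF1F2expression(c)}.
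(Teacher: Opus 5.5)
Your proof is correct and is essentially the paper's argument. The paper computes $\lim_{\alpha\to a}\kappa(\alpha)\langle R_\alpha(\lambda_h(\alpha)\pm\iota0)v,w\rangle=-\langle P_{\lambda_0}v,w\rangle/(h\pm\iota)$ separately and then subtracts via \eqref{R9}, while you subtract first; this is the same computation. Your justification of $\gamma(\eta_{v,u_j},\lambda_0)=\langle v,\phi_j\rangle$ supplies a step the paper uses without comment in \eqref{R7}, and the conjugation bookkeeping you flagged does come out as $\langle v,\phi\rangle\langle\phi,w\rangle$.
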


\begin{proof}
First note that, by \eqref{R1}, for any \(\alpha \in I \setminus \{{a}\}\),
\begin{equation}\label{R4}
\begin{split}
&\kappa(\alpha) \langle R_\alpha(\lambda_h(\alpha) + \iota 0) v, w \rangle\\
&= \kappa(\alpha) \langle R_0(\lambda_h(\alpha) + \iota 0) v, w \rangle \\&\quad-\frac{\kappa(\alpha)}{F(\alpha, \lambda_h(\alpha))}
\sum_{j,k=1}^n\Big\{ \sqrt{\alpha_j}\sqrt{\alpha_k}{\tilde{b}_{jk}(\alpha, \lambda_h(\alpha) + \iota 0)}\langle R_0(\lambda_h(\alpha) + \iota 0) v, u_j \rangle \\&\hspace{8cm}\times\langle R_0(\lambda_h(\alpha) + \iota 0) u_k, w \rangle\Big\}.
\end{split}
\end{equation}
By \eqref{ch5-lemF1F2expression(c)} and \eqref{H2}, we have
\begin{equation}\label{cofactorlimit}
\lim_{\alpha\to{a}}  {\tilde{b}_{jk}(\alpha, \lambda_h(\alpha) + \iota 0)} 
=  {\mu\overline{w_{j1}} w_{k1}} .
\end{equation}

Taking the limit as \(\alpha \to {a}\) in \eqref{R4} and using \eqref{eqn-laplacianresolventlimit}, \eqref{eqn-F1limit1} and \eqref{eqn-F2limit1}, together with \eqref{cofactorlimit}, we obtain
\begin{equation}\label{R7}
\begin{split}
\lim_{\alpha\to{a}} \kappa(\alpha) \langle R_\alpha(\lambda_h(\alpha) + \iota 0) v, w \rangle
&= -\frac{1}{\mu\|\phi\|^2(h + \iota)} \sum_{j,k=1}^n\mu\sqrt{a_j}\sqrt{a_k}\overline{w_{j1}} w_{k1} \langle v, \phi_j \rangle \langle \phi_k, w \rangle \\
&= -\frac{\langle v, \phi \rangle \langle \phi, w \rangle}{\|\phi\|^2(h + \iota)}
= -\frac{\langle P_{\lambda_0} v, w \rangle}{h + \iota}.
\end{split}
\end{equation}
By an analogous argument, we obtain
\begin{equation}\label{R8}
\lim_{\alpha\to{a}} \kappa(\alpha) \langle R_\alpha(\lambda_h(\alpha) - \iota 0) v, w \rangle
= -\frac{\langle P_{\lambda_0} v, w \rangle}{h - \iota}.
\end{equation}
The conclusion now follows from \eqref{R7}, \eqref{R8} and \eqref{R9}.
\end{proof}
\begin{remark}Spectral concentration, time-decay behaviour, and a lower bound on the sojourn time can be given as in 
\cite[Corollaries 4.6 and 4.8, and Theorem 5.4]{LS1}.\end{remark}

\subsection{Behaviour of scattering amplitude and scattering cross-section}Let $S^{(\alpha)}_\lambda$ denote the scattering matrix for the pair
$(H_0,H_\alpha)$ at ``energy'' $\lambda$. Let
$R^{(\alpha)}_\lambda$ and $\sigma_\alpha(\lambda)$ denote the
corresponding scattering amplitude operator and scattering cross-section
respectively. For the definitions and further
details one may consult \cite[Chapter 7]{KBSBook} . 

For $\alpha\in I\setminus\{{a}\},\,\lambda\in J,$ the scattering amplitude operator $R^{(\alpha)}_\lambda$ is given by:
\begin{equation}\label{laplacianscatteringmatrix}
R^{(\alpha)}_\lambda
=
-\frac{2\pi \iota}{F(\alpha,\lambda)}
\left(
\sum_{j,k=1}^n\sqrt{\alpha_j}\sqrt{\alpha_k}
\tilde{b}_{jk}(\alpha,\lambda+\iota0)
\langle \,\cdot\,,u_{j,\lambda}\rangle\, u_{k,\lambda}
\right).
\end{equation}

\noindent The above formula can be proved along the same lines as that of \cite[Proposition~8.22]{KBSBook} where the concern of the authors was only rank-one perturbation.

In the next theorem we describe the asymptotic behaviour of
$R^{(\alpha)}_\lambda$ and $\sigma_\alpha(\lambda)$ near the embedded
eigenvalue $\lambda_0$ of $H_a$ as $\alpha\to a$.  
\begin{theorem}\label{Rthm}
Assume that \(u'_{j,\lambda_0} \neq 0\) for all \(j = 1, \cdots, n\). Then, for any fixed \(h \in \mathbb{R}\),
\begin{equation}\label{eqn-1113}
\lim_{\alpha \to {a}} R^{(\alpha)}_{\lambda_h(\alpha)}
= -\frac{2\iota}{h + \iota} 
\Bigl\langle \cdot, \frac{\phi_{\lambda_0}}{\|\phi_{\lambda_0}\|} \Bigr\rangle 
\frac{\phi_{\lambda_0}}{\|\phi_{\lambda_0}\|}
\end{equation}
in the Hilbert--Schmidt norm. Consequently, for any fixed \(h \in \mathbb{R}\),
\begin{equation}\label{eqn-1114}
\lim_{\alpha \to {a}} \sigma_\alpha(\lambda_h(\alpha)) = \frac{4\pi}{\lambda_0} \cdot \frac{1}{h^2 + 1}.
\end{equation}
\end{theorem}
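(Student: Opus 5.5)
The plan is to insert the explicit formula \eqref{laplacianscatteringmatrix} at $\lambda=\lambda_h(\alpha)$ and rewrite it as
\[
R^{(\alpha)}_{\lambda_h(\alpha)}=-2\pi\iota\,\frac{\kappa(\alpha)}{F(\alpha,\lambda_h(\alpha))}\cdot\frac{(\lambda_h(\alpha)-\lambda_0)^2}{\kappa(\alpha)}\sum_{j,k=1}^n\sqrt{\alpha_j\alpha_k}\,\tilde b_{jk}(\alpha,\lambda_h(\alpha)+\iota0)\Bigl\langle\cdot,\tfrac{u_{j,\lambda_h(\alpha)}}{\lambda_h(\alpha)-\lambda_0}\Bigr\rangle\tfrac{u_{k,\lambda_h(\alpha)}}{\lambda_h(\alpha)-\lambda_0}.
\]
I will then take the limit of each factor separately.

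\textbf{The first and third factors.} By \eqref{eqn-F1limit1} and \eqref{eqn-F2limit1}, the first factor tends to $-2\pi\iota/(\mu\|\phi\|^2(h+\iota))$. For the third factor:
\begin{itemize}
\item Since $u_{j,\lambda_0}=0$, we have $u_{j,\lambda}/(\lambda-\lambda_0)=(\phi_j)_\lambda$, which is continuous in $L^2(S^2)$-norm with value $u'_{j,\lambda_0}$ at $\lambda_0$.
\item By \eqref{cofactorlimit}, $\tilde b_{jk}\to\mu\overline{w_{j1}}w_{k1}$.
\item Hence the sum converges to $\mu\langle\cdot,\phi_{\lambda_0}\rangle\phi_{\lambda_0}$, where $\phi_{\lambda_0}=\sum_j\sqrt{a_j}w_{j1}u'_{j,\lambda_0}$ (the conjugates $\overline{w_{j1}}$ go into the second slot of the inner product).
\item Each term is rank one with vectors converging in $L^2(S^2)$, so this convergence holds in Hilbert--Schmidt norm.
\end{itemize}

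\textbf{The middle factor (main obstacle).} I must show $(\lambda_h(\alpha)-\lambda_0)^2/\kappa(\alpha)\to\|\phi\|^2/(\pi\|\phi_{\lambda_0}\|^2)$. The steps are as follows.
\begin{itemize}
\item The hypothesis $u'_{j,\lambda_0}\neq0$ gives $r=1$.
\item Apply \eqref{ch5-lemF1F2expression(b)} at $\lambda=\lambda(\alpha)$. Using $\eta_{kj}(\lambda)=(\lambda-\lambda_0)^2\langle(\phi_k)_\lambda,(\phi_j)_\lambda\rangle$, continuity of $\tilde c_{jk}$ together with \eqref{H2}, and $\lambda(\alpha)\to\lambda_0$, this yields
\[
F_2(\alpha,\lambda(\alpha))=(\lambda(\alpha)-\lambda_0)^2\bigl(\pi\mu\|\phi_{\lambda_0}\|^2+o(1)\bigr).
\]
\item Consequently $\kappa(\alpha)=(\lambda(\alpha)-\lambda_0)^2\bigl(\pi\|\phi_{\lambda_0}\|^2/\|\phi\|^2+o(1)\bigr)$.
\item Here $\lambda(\alpha)\neq\lambda_0$ for $\alpha\neq a$. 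Indeed $D(\alpha,\lambda_0)=0$ forces $F_2(\alpha,\lambda_0)=0$, which is excluded by model assumption (d).
\item I need $\phi_{\lambda_0}\neq0$. This is implicit in the statement, since the formula divides by $\|\phi_{\lambda_0}\|$, and I will state it as the nondegeneracy being used. If it failed, the leading term would vanish and a higher-order expansion would be required.
\item Given this, $\kappa(\alpha)=O((\lambda(\alpha)-\lambda_0)^2)$. Therefore $\lambda_h(\alpha)-\lambda_0=(\lambda(\alpha)-\lambda_0)(1+O(\lambda(\alpha)-\lambda_0))$, and the claimed limit follows.
\end{itemize}
Multiplying the three limits gives \eqref{eqn-1113}: the factors $\mu\|\phi\|^2$ and $\pi$ cancel, leaving $-\tfrac{2\iota}{h+\iota}\|\phi_{\lambda_0}\|^{-2}\langle\cdot,\phi_{\lambda_0}\rangle\phi_{\lambda_0}$.

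\textbf{Cross-section.} I will use the relation $\sigma_\alpha(\lambda)=\frac{\pi}{\lambda}\|R^{(\alpha)}_\lambda\|_{HS}^2$ from \cite[Chapter 7]{KBSBook}, with the normalization of $U$ in \eqref{iso}. The HS norm is continuous, and the limit operator is $-\tfrac{2\iota}{h+\iota}$ times a rank-one orthogonal projection, so its squared HS norm is $4/(h^2+1)$. Together with $\lambda_h(\alpha)\to\lambda_0$, this gives \eqref{eqn-1114}.
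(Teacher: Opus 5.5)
Your proof is correct. It shares the paper's overall architecture: the prefactor $\kappa(\alpha)/F(\alpha,\lambda_h(\alpha))$ handled by the $F_1/\kappa$ and $F_2/\kappa$ limits, Hilbert--Schmidt convergence of finitely many rank-one pieces, and the expansion of $F_2$ at $\lambda(\alpha)$ to pin down the size of $\kappa(\alpha)$. The difference is the normalization.

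The paper divides each piece $\langle\cdot,u_{j,\lambda}\rangle u_{k,\lambda}$ by $\|u_{j,\lambda}\|\,\|u_{k,\lambda}\|$. That is why it needs $u'_{j,\lambda_0}\neq 0$ for every $j$, so that each normalized vector has a limit, and it then has to compute $n^2$ separate limits $I_{jk}(\alpha,h)$. You instead factor out $(\lambda_h(\alpha)-\lambda_0)^2$ uniformly, using $u_{j,\lambda}=(\lambda-\lambda_0)(\phi_j)_\lambda$. All the asymptotic information then sits in the single scalar ratio $(\lambda_h(\alpha)-\lambda_0)^2/\kappa(\alpha)$.

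This buys two things.
\begin{itemize}
\item Your argument only needs $\phi_{\lambda_0}\neq 0$, which the statement tacitly assumes anyway since it divides by $\|\phi_{\lambda_0}\|$, and which already forces $r=1$. The hypothesis that every $u'_{j,\lambda_0}$ is nonzero becomes superfluous. As a by-product, your expansion $\kappa(\alpha)=(\lambda(\alpha)-\lambda_0)^2\bigl(\pi\|\phi_{\lambda_0}\|^2/\|\phi\|^2+o(1)\bigr)$ also justifies the paper's unproved claim that $\kappa(\alpha)>0$ near $a$.
\item You explicitly prove $\lambda_h(\alpha)-\lambda_0=(\lambda(\alpha)-\lambda_0)(1+o(1))$, using $\kappa(\alpha)=O((\lambda(\alpha)-\lambda_0)^2)$ and $\lambda(\alpha)\neq\lambda_0$ from assumption (d). The paper's sketch needs this but passes between $\lambda(\alpha)$ and $\lambda_h(\alpha)$ without comment: it evaluates $F_2$ and the inner products $\langle u_{m,\lambda(\alpha)},u_{l,\lambda(\alpha)}\rangle$ at $\lambda(\alpha)$, but the norms $\|u_{j,\lambda_h(\alpha)}\|$ at $\lambda_h(\alpha)$.
\end{itemize}

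The paper's normalization has the merit of mirroring the rank-one argument of \cite[Theorem 7.4]{LS1}. Yours is cleaner and slightly more general.
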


\begin{proof}(Sketch)
Using \eqref{laplacianscatteringmatrix}, write
\begin{equation}\label{initial}
R^{(\alpha)}_{\lambda_h(\alpha)} = -2\iota \, I(\alpha,h) 
\sum_{j,k=1}^n I_{jk}(\alpha,h) 
\Bigl\langle \cdot, \frac{u_{j,\lambda_h(\alpha)}}{\|u_{j,\lambda_h(\alpha)}\|} \Bigr\rangle 
\frac{u_{k,\lambda_h(\alpha)}}{\|u_{k,\lambda_h(\alpha)}\|},
\end{equation}
where
\begin{equation}\label{eqn-I}
I(\alpha,h) = \frac{\kappa(\alpha) }{F(\alpha,\lambda_h(\alpha))},
\quad
I_{jk}(\alpha,h) = \frac{\sqrt{\alpha_j}\sqrt{\alpha_k} \pi \tilde{b}_{jk}(\alpha,\lambda_h(\alpha) + \iota 0) 
\|u_{j,\lambda_h(\alpha)}\| \|u_{k,\lambda_h(\alpha)}\|}
{\kappa(\alpha)  }.
\end{equation}
A similar argument as in \cite[Theorem 7.4]{LS1} together with the definition of the vectors \(\phi_j\) yields, for all \(j,k\)
\begin{equation}\label{eqn-1}
\lim_{\alpha \to {a}} 
\Bigl\langle \cdot, \frac{u_{j,\lambda_h(\alpha)}}{\|u_{j,\lambda_h(\alpha)}\|} \Bigr\rangle 
\frac{u_{k,\lambda_h(\alpha)}}{\|u_{k,\lambda_h(\alpha)}\|}
= \Bigl\langle \cdot, \frac{u'_{j,\lambda_0}}{\|u'_{j,\lambda_0}\|} \Bigr\rangle 
\frac{u'_{k,\lambda_0}}{\|u'_{k,\lambda_0}\|}
= \Bigl\langle \cdot, \frac{\phi_{j,\lambda_0}}{\|\phi_{j,\lambda_0}\|} \Bigr\rangle 
\frac{\phi_{k,\lambda_0}}{\|\phi_{k,\lambda_0}\|}
\end{equation}
in the Hilbert--Schmidt norm.

To compute \(\lim\limits_{\alpha \to {a}} I_{jk}(\alpha,h)\), use \eqref{ch5-lemF1F2expression(c)}  together with the fact that $u_{l,\lambda_0}=u_{k,\lambda_0}=0$ and substitute the expression for $\kappa(\alpha)$ to obtain\begin{equation}\label{comp11}  I_{jk}(\alpha,h)
        = \frac{\mu\|\phi\|^2\sqrt{\alpha_j}\sqrt{\alpha_k}\pi\tilde{c}_{jk}(\alpha,\lambda_h(\alpha))
                \|u_{j,\lambda_h(\alpha)}\|\,
                \|u_{k,\lambda_h(\alpha)}\|}
                {F_2(\alpha,\lambda(\alpha))}+O(|\alpha-{a}|^2).\end{equation}
  Use \eqref{ch5-lemF1F2expression(b)} to write \begin{align*}\frac{F_2(\alpha,\lambda(\alpha))}{ \pi\,
                \|u_{j,\lambda_h(\alpha)}\|\,
                \|u_{k,\lambda_h(\alpha)}\|}&=\dfrac{\displaystyle\sum_{l,m=1}^n\sqrt{\alpha_l}\sqrt{\alpha_m}\tilde{c}_{lm}(\alpha,\lambda(\alpha))
                \langle u_{m,\lambda(\alpha)},u_{l,\lambda(\alpha)}\rangle}{ 
                \|u_{j,\lambda_h(\alpha)}\|\,
                \|u_{k,\lambda_h(\alpha)}\|}+O(|\alpha-{a}|^2).\end{align*} 
Since $\tilde{c}_{lm}(a,\lambda_0)=\mu \overline{w_{l1}}w_{m1}$ and \(
\lim\limits_{\alpha \to {a}} 
\frac{\langle u_{n,\lambda_h(\alpha)}, u_{m,\lambda_h(\alpha)} \rangle}
{\|u_{j,\lambda_h(\alpha)}\| \|u_{k,\lambda_h(\alpha)}\|}
= \frac{\langle \phi_{n,\lambda_0}, \phi_{m,\lambda_0} \rangle}
{\|\phi_{j,\lambda_0}\| \cdot \|\phi_{k,\lambda_0}\|}
\), we obtain
\begin{align*}
\lim_{\alpha \to {a}} 
\frac{F_2(\alpha,\lambda(\alpha))}{  \pi 
\|u_{j,\lambda_h(\alpha)}\| \|u_{k,\lambda_h(\alpha)}\|}
= \frac{\mu\|\phi_{\lambda_0}\|^2}{  \|\phi_{j,\lambda_0}\|\cdot\|\phi_{k,\lambda_0}\|}.
\end{align*}
Use \eqref{comp11} along with the above to get
\begin{equation}\label{eqn-3}
\lim_{\alpha \to {a}} I_{jk}(\alpha,h) 
= \frac{\mu\sqrt{a_j}\sqrt{a_k} \overline{w_{j1}}w_{k1} \|\phi_{j,\lambda_0}\|\cdot \|\phi_{k,\lambda_0}\|\cdot\|\phi\|^2}
{ \|\phi_{\lambda_0}\|^2}.
\end{equation}
As \(\phi = \sum_{j=1}^n\sqrt{a_j} w_{j1} \phi_j\), take the limit in \eqref{initial}, substitute the limits  for $F_1/\kappa$ and $F_2/\kappa$ (from \eqref{eqn-F1limit1}, \eqref{eqn-F2limit1}) and use \eqref{eqn-1},\eqref{eqn-3} to  get
\begin{align*}
\lim_{\alpha \to {a}} R^{(\alpha)}_{\lambda_h(\alpha)}
&= -\frac{2\iota}{(h + \iota)\mu\|\phi\|^2 \|\phi_{\lambda_0}\|^2} 
\sum_{j,k=1}^n \sqrt{a_j}\sqrt{a_k} \overline{w_{j1}} w_{k1}\mu\|\phi\|^2 
\Bigl\langle \cdot, \phi_{j,\lambda_0}\Bigr\rangle \phi_{k,\lambda_0}\\&
= -\frac{2\iota}{h + \iota} 
\left\langle \cdot, \frac{\phi_{\lambda_0}}{\|\phi_{\lambda_0}\|} \right\rangle 
\frac{\phi_{\lambda_0}}{\|\phi_{\lambda_0}\|}.
\end{align*}
This proves \eqref{eqn-1113} which immediately implies \eqref{eqn-1114}.
\end{proof}
 \subsection{Behaviour of time delay}
Let $\zeta_\alpha$ denote the average time delay associated with the pair
$(H_0,H_\alpha)$. By an analogous argument as in \cite[Subsection 8.2]{LS2}, for $\lambda\in J$ \[
 \zeta_\alpha(\lambda)=  -2\,\frac{
F_1(\alpha,\lambda)\,\frac{\partial F_2}{\partial\lambda}(\alpha,\lambda)
-
F_2(\alpha,\lambda)\,\frac{\partial F_1}{\partial\lambda}(\alpha,\lambda)
}{
|F(\alpha,\lambda)|^2}.\]
 The next theorem describes the asymptotic behaviour
of the average time delay near the embedded eigenvalue $\lambda_0$ of
$H_a$ as $\alpha\to a$. 
\begin{theorem}\label{31}
For each fixed $h\in\mathbb{R}$,
\[
\lim_{\alpha\to{a}}
\kappa(\alpha)\,\zeta_\alpha(\lambda_h(\alpha))
=
\frac{2}
     {h^2+1}.
\]
\end{theorem}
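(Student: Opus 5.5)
Multiply the time-delay formula by $\kappa(\alpha)$ and divide numerator and denominator by $\kappa(\alpha)^2$:
\[
\kappa(\alpha)\zeta_\alpha(\lambda_h(\alpha))
= -2\,\frac{\dfrac{F_1}{\kappa}\,\dfrac{\partial F_2}{\partial\lambda}-\dfrac{F_2}{\kappa}\,\dfrac{\partial F_1}{\partial\lambda}}{\Bigl(\dfrac{F_1}{\kappa}\Bigr)^2+\Bigl(\dfrac{F_2}{\kappa}\Bigr)^2},
\]
with all functions evaluated at $(\alpha,\lambda_h(\alpha))$. By \eqref{eqn-F1limit1} and \eqref{eqn-F2limit1}, the denominator tends to $(\mu\|\phi\|^2)^2(h^2+1)$, which is nonzero. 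It therefore remains to find the limits of the two derivative factors.

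For $\partial F_1/\partial\lambda$: $F_1$ is smooth in $(\alpha,\lambda)$ near $(a,\lambda_0)$, since by \eqref{ch5-lemF1F2expression(a)} it equals $\det C$ up to a smooth $O(|\lambda-\lambda_0|^{4r})$ term, and all $\eta_{kj}$ and $\gamma(\eta_{kj},\cdot)$ are smooth. As $\lambda_h(\alpha)\to\lambda_0$, continuity and \eqref{pathderivative} give $\partial_\lambda F_1(\alpha,\lambda_h(\alpha))\to\mu\|\phi\|^2$. The second term of the numerator thus tends to $-(\mu\|\phi\|^2)^2$.

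For $\partial F_2/\partial\lambda$: it suffices to show $\partial_\lambda F_2(\alpha,\lambda_h(\alpha))\to 0$. Since $F_1/\kappa$ stays bounded, the first numerator term then vanishes in the limit. Use \eqref{ch5-lemF1F2expression(b)}: $F_2=\sum_{j,k} d_{jk}\tilde c_{jk}+O(|\lambda-\lambda_0|^{4r})$, where the remainder is a smooth polynomial expression in the $d_{jk}$ of degree at least $2$. Each $d_{jk}=\sqrt{\alpha_j\alpha_k}\pi\eta_{kj}$ is smooth and vanishes to order $2r\ge 2$ at $\lambda_0$ (here $r\ge1$ because $u_{j,\lambda_0}=0$). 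Its $\lambda$-derivative is therefore $O(|\lambda-\lambda_0|^{2r-1})$, uniformly for $\alpha$ near $a$. Hence $\partial_\lambda F_2(\alpha,\lambda)=O(|\lambda-\lambda_0|^{2r-1})\to0$ as $(\alpha,\lambda)\to(a,\lambda_0)$. In fact $\partial_\lambda F_2(a,\lambda_0)=0$ follows directly from $\eta_{kj}'(\lambda_0)=\langle u'_{k,\lambda_0},u_{j,\lambda_0}\rangle+\langle u_{k,\lambda_0},u'_{j,\lambda_0}\rangle=0$, and continuity of $\partial_\lambda F_2$ finishes.

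Combining, the numerator tends to $-2\bigl(h\mu\|\phi\|^2\cdot 0-\mu\|\phi\|^2\cdot\mu\|\phi\|^2\bigr)=2(\mu\|\phi\|^2)^2$. Dividing by the denominator limit yields $2/(h^2+1)$.

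The only delicate point is justifying that $F_2$ is $C^1$ jointly near $(a,\lambda_0)$ with vanishing $\lambda$-derivative there. This needs smoothness of the maps $\lambda\mapsto u_{j,\lambda}$, which holds because $u_j\in\mathcal E$. So it should be routine; the rest is direct substitution.
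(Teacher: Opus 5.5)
Your proposal is correct and follows the same route as the paper: substitute $\lambda=\lambda_h(\alpha)$, use the limits of $F_1/\kappa$ and $F_2/\kappa$, show $\partial_\lambda F_1(\alpha,\lambda_h(\alpha))\to\mu\|\phi\|^2$ by continuity, and show $\partial_\lambda F_2(\alpha,\lambda_h(\alpha))\to 0$. Your justification of the last limit is more precise than the paper's. The paper infers it from the size estimate $F_2=O(|\alpha-a|^2)$, which alone does not control a derivative; you argue instead that the $d_{jk}$ vanish to order $2r\ge 2$ in $\lambda$ at $\lambda_0$, and that $\partial_\lambda F_2$ is jointly continuous.
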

\begin{proof}
Substitute $\lambda=\lambda_h(\alpha)$ into the expression for $\zeta_\alpha(\lambda)$ above to obtain
\begin{equation}\label{equation4}\begin{split}
&\kappa(\alpha)\,\zeta_\alpha(\lambda_h(\alpha))\\
&=
-2
\frac{
\kappa(\alpha)\,F_1(\alpha,\lambda_h(\alpha))
\,\frac{\partial F_2}{\partial\lambda}(\alpha,\lambda_h(\alpha))
}{
|F(\alpha,\lambda_h(\alpha))|^2
}+2\frac{
\kappa(\alpha)\,F_2(\alpha,\lambda_h(\alpha))
\,\frac{\partial F_1}{\partial\lambda}(\alpha,\lambda_h(\alpha))
}{
|F(\alpha,\lambda_h(\alpha))|^2
}.\end{split}
\end{equation}
\noindent By the asymptotic $F_2(\alpha,\lambda_h(\alpha))=O(|\alpha-\alpha_0|^2)$ (\eqref{ch5-lemF1F2expression(b)}), we have
\[
\lim\limits_{\alpha\to{a}}
 {
\frac{\partial F_2}{\partial\lambda}(\alpha,\lambda_h(\alpha))
} 
=
0.\] 
Recall from \eqref{pathderivative} that, \[
\lim\limits_{\alpha\to{a}}
\frac{\partial F_1}{\partial\lambda}(\alpha,\lambda_h(\alpha))
=
\frac{\partial F_1}{\partial\lambda}({a},\lambda_0)=
\mu||\phi||^2.
\] Take the limit in \eqref{equation4} and substitute the limits  for $F_1/\kappa$ and $F_2/\kappa$ (from \eqref{eqn-F1limit1}, \eqref{eqn-F2limit1}) and use the above two limits to obtain the result.
\end{proof}
 \section*{Acknowledgements}
 The authors gratefully acknowledge Professor K.B. Sinha for some fruitful discussions during the early stages of this work.

The first author acknowledges the support received from the National Board for Higher Mathematics (Department of Atomic Energy, Government of India) Ph.D. Scholarship, Grant No. 0203/7/2019/RD-II/14855, with which this work was initiated. He also gratefully acknowledges the support received from IISER Mohali to carry out this research work.

\bibliographystyle{alpha}

\begin{thebibliography}{99}\bibitem{KBSBook} W. O. Amrein, J. M. Jauch, K. B. Sinha, Scattering theory in quantum mechanics. Physical principles and mathematical methods, W. A. Benjamin, Inc., Reading, Mass.-London-Amsterdam, 1977, pp. 691.
\bibitem{Astaburuaga2024}
M. A. Astaburuaga, V. H. Cortés, C. Fernández and R. Del Río, Resonances and stability of absolutely continuous spectrum for finite rank perturbations, \emph{Pure Appl. Funct. Anal.}, vol. 9, no. 4, pp. 899--914, 2024. \bibitem{LS1}
H.~Bansal, A.~Maharana, L.~Sahu and K.~B.~Sinha,
``Shape-resonance in spectral density, scattering cross-section, time delay, and bound on sojourn time,''
\emph{J.~Math. Anal. Appl.}, vol.~558, no.~1, Article~130373, 2026. \url{https://doi.org/10.1016/j.jmaa.2025.130373}
\bibitem{LS2}
H.~Bansal, A.~Maharana, L.~Sahu,
``Resonance near a doubly degenerate embedded eigenvalue,''
\emph{ Manuscript  submitted for publication.}
\url{https://doi.org/10.48550/arXiv.2603.08554}
\bibitem{howland} J. S. Howland, Perturbation of embedded eigenvalues by operators of finite rank, J. Math. Anal. Appl. 23, 1968, 575--584. 
 \bibitem{Orth}
A. Orth, Quantum mechanical resonance and limiting absorption: the many body problem, \emph{Comm. Math. Phys.}, vol. 126, no. 3, pp. 559--573, 1990.   

\end{thebibliography}

\end{document}